\numberwithin{equation}{section}
\newcommand\blfootnote[1]{%
\begingroup
\renewcommand\thefootnote{}\footnote{#1}
\addtocounter{footnote}{-1}%
\endgroup
}
 \newcommand{\qed}{\hfill\rule{2mm}{3mm}\vspace{4mm}}
 \newtheorem{theorem}{Theorem}[section]
 \newtheorem{lemma}[theorem]{Lemma}
 \newtheorem{remark}{Remark}
 \def\beqnn{\begin{eqnarray*}}\def\eeqnn{\end{eqnarray*}}
 \def\<{\langle}\def\>{\rangle}
 \def\beqlb{\begin{eqnarray}}\def\eeqlb{\end{eqnarray}}
 \def\qed{\hfill$\Box$\medskip}
\begin{document}

\bigskip\bigskip

\noindent{\Large\bf Limit theorems for the minimal position of a branching \\
  random walk in random environment}\blfootnote{\noindent * Corresponding author.}\footnote{
\noindent  Supported by NSFC (NO.~11531001).
}

\noindent{
Wenming Hong\footnote{ School of Mathematical Sciences
\& Laboratory of Mathematics and Complex Systems, Beijing Normal
University, Beijing 100875, P.R. China. Email: wmhong@bnu.edu.cn} ~ Wanting Hou\footnote{ Department of Mathematics, Northeastern University, Shenyang 110004, P.R. China. Email: 201531130022@mail.bnu.edu.cn} ~ Xiaoyue Zhang*\footnote{ School of Mathematical Sciences
\& Laboratory of Mathematics and Complex Systems, Beijing Normal
University, Beijing 100875, P.R. China. Email: zhangxiaoyue@mail.bnu.edu.cn}

\begin{center}
\begin{minipage}{12cm}
\begin{center}\textbf{Abstract}\end{center}
\footnotesize
We consider a branching system of random walk in random environment (in location) in $\mathbb{N}$. We will give the exact limit value of $\frac{M_{n}}{n}$,  where $M_{n}$ denotes the  minimal position of branching random walk at time $n$. A key step in the proof is to transfer our  branching random walks in random environment (in location) to  branching random walks in random environment (in time), by use of  Bramson's ``branching processes within a branching process" (\cite{BR78}).
\bigskip

\mbox{}\textbf{Keywords:}\quad Random walk in random environment, Branching random walk, Branching process, Minimal position, Jumping time; \\
\mbox{}\textbf{Mathematics Subject Classification}:  Primary 60J80; Secondary 60G50.

\end{minipage}
\end{center}

\section{ Introduction}

There have been abundant works on  the minimal position of branching random walks. Hammersley (\cite{HA74}), Kingman (\cite{KI75}), Biggins (\cite{BI76}) gave the first-order limit of the minimal position of a branching random walk.
When this model is extended to a random environment, both time and spatial position will affect the particle behavior. Therefore, from these two perspectives, it is possible to generate some different models of branching random walks in random environment. Greven and den Hollander (\cite{GH92}) considered the model where the reproduction law of the particles depends on their locations while the transition probabilities are the same everywhere. They discussed problems on global particle density and local particle density. Comets et al. (\cite{CMP98}) considered the model where both the reproduction law of the particles and  the transition probabilities depend on their locations in $\mathbb{N}$, and gave an appropriate classification of the transience and recurrence. Bartsch et al. (\cite{BG09}) considered the model which is similar to the model in (\cite{CMP98}) but the movement is limited to 0 and 1, they mainly cared about problems on local survival and global survival. Devulder (\cite{DA07}) consider a branching system of random walk in random environment (in location), where the particles branching with a fixed law but move as random walk in random environment, and particle's displacements are limited to $\pm1$. Devulder (\cite{DA07}) discussed the classification criteria for the case that the upper limit of $\frac{m_{n}^{*}}{n}$ is greater than $0$ and the case that the lower limit of $\frac{m_{n}^{*}}{n}$ is less than $0$, where $m_{n}^{*}$ denotes the location of the rightmost particle at time $n$. But the accurate velocity of limit of $\frac{m_{n}^{*}}{n}$ has not been specified. We also note that, Huang and Liu (\cite{HLL14}, \cite{HL14}) considered the branching random walks in random environment (in time) and gave the limit theorems of the minimal position and maximal position and some related large deviation principles.

In the present paper, we discuss a model that is similar to that in Devulder (\cite{DA07}) except that we restrict the displacements to be $0$ or $1$. We will give the exact limit value of $\frac{M_{n}}{n}$,  where $M_{n}$ denotes the  minimal position of branching random walk at time $n$. A key step is to transfer our  branching random walks in random environment (in location) to  branching random walks in random environment (in time), by use of  Bramson's ``branching processes within a branching process" (\cite{BR78}), and then applying the result of Huang and Liu (\cite{HLL14}, \cite{HL14}).

We consider a branching system of random walks in random environment (in location) in $\mathbb{N}$, where the particles branching with a fixed law but move as random walk in random environment. For details, let $(\omega_{i})_{i\in \mathbb{N}}$ be a collection of independent and identically distributed random variables, taking values in $(0,1)$. $\eta$ is the distribution of $\omega\coloneqq \left(\omega_{i}\right)_{i\in \mathbb{N}}$.  For any realization of the environment $\omega\coloneqq \left(\omega_{i}\right)_{i\in \mathbb{N}}$, we define a random walk in random environment $\left(X_{n}\right)_{n\in \mathbb{N}}$ which satisfies,
$X_{0}=0$, and
\beqlb\label{dis}
P_{\omega}(X_{n+1}=i|X_{n}=i)=1-P_{\omega}(X_{n+1}=i+1|X_{n}=i) =\omega_{i};
\eeqlb

We assume that there exists $\delta>0$ such that $\omega_{0}\in (\delta,1-\delta) $ $\eta$-$a.s.$ Based on this model of random walk, we construct the branching system as following,

$\bullet$ At time $n=0$, there is only one particle at the origin;

$\bullet$ At time $n=1$, the particle dies and reproduces $k$ offspring with probability $p_{k}$. Each particle moves independently to a new position as the way described in (\ref{dis});

$\cdots$ $\cdots$

$\bullet$ Iterating this procedure, each particle reproduces and makes displacement in the same way with their ancestors. And thus we get a branching  random walk in random environment (in location), write BRWiRE (in location) for short. To avoid the possibility of extinction and trivial special cases, we assume that
\beqlb\label{2.3}
p_{0}=0,~p_{1}<1.
\eeqlb
this implies  $m\coloneqq \displaystyle\sum_{k=0}^{\infty}kp_{k}>1$.

Let $Z_{n}$  represent the number of particles in generation $n$ of the BRWiRE (in location), with $X_{n; k}$, $k=1,\cdots, Z_n$, being the positions of these particles,
then $\{Z_{n}\}$ is a Galton-Watson process with $Z_{0}=1$, $P_{\omega}(Z_{1}=k)=p_{k}$. We call $P_{\omega}$ the {\it quenched law}, and if $\eta$ denotes the law of the environment $(\omega_{i})_{i\in \mathbb{N}}$, we call \beqnn
\mathbb{P}(\cdot)\coloneqq \int P_{\omega}(\cdot)\eta(dw),
\eeqnn
 the {\it annealed law}.

\

We write
\beqlb\label{M}
M_n:= \min_{1\leq k \leq Z_n} X_{n; k}.
\eeqlb
This model  is just the one considered by Bramson (\cite{BR78}) (and then by  Dekking and Host (\cite{DH91})) if the displacement transition probability in (\ref{dis}) is constant. We can also classify three different cases in terms of $\omega_{max}\coloneqq \sup\{x: x\in Supp~\omega_{0}\}$,
\beqnn
\omega_{max}
\begin{cases}
>\frac{1}{m},& \text{supercritical};\\
= \frac{1}{m},& \text{critical};\\
<\frac{1}{m},& \text{subcritical}.
\end{cases}
\eeqnn
which is  consistent with the classification in  Dekking and Host (\cite{DH91}). The limit behavior of $M_n$ can be obtained accordingly as follows.

\begin{theorem}\label{th2.1.}(Supercritical case)~~If $\omega_{max}>\frac{1}{m}$ then there exists an almost surely finite random variable $M$ such that
\beqnn
M_{n}\to M ~~~~\mathbb{P}\text{-a.s.}
\eeqnn
\end{theorem}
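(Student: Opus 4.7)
The plan is in three stages: monotonicity of $M_n$, plentiful ``good'' sites where $\omega_j>1/m$, and a Galton--Watson survival argument at such a site. For the first stage, because each displacement in \eqref{dis} is $0$ or $+1$, every particle in generation $n+1$ occupies the position of, or one to the right of, some particle in generation $n$; hence $M_{n+1}\geq M_n$ pointwise. The monotone limit $M:=\lim_n M_n\in\mathbb{N}\cup\{\infty\}$ exists almost surely, and it suffices to prove $M<\infty$ a.s.

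For the second stage, under $\omega_{\max}>1/m$ choose $\epsilon>0$ with $\eta(\omega_0>1/m+\epsilon)>0$; by the i.i.d.\ environment and Borel--Cantelli, $\eta$-a.s.\ the set $J_\epsilon:=\{j\geq 0:\omega_j>1/m+\epsilon\}$ is infinite. Fix such an $\omega$. Since $p_0=0$, every genealogical line is infinite; the position of any fixed line is a random walk of type \eqref{dis} whose stay probability at site $i$ is $\omega_i\leq 1-\delta<1$, so the line visits every site of $\mathbb{N}$ in finite time $\mathbb{P}_\omega$-a.s. In particular, for each $j$ the first-arrival particle $\xi_j$ at site $j$ exists a.s., and the time $\tau_j$ at which it arrives is a.s.\ finite.

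For the third stage, given $j\in J_\epsilon$, let $\mathcal{T}_j$ be the subtree rooted at $\xi_j$ consisting of descendants that never leave $j$. Conditional on $\omega$ and on $\xi_j$, $\mathcal{T}_j$ is a Galton--Watson tree obtained by thinning the BRW offspring law with parameter $\omega_j$; its mean is $m\omega_j>1+m\epsilon$, so its quenched extinction probability is at most $1-q_\epsilon$ for some $q_\epsilon>0$ uniform in $j\in J_\epsilon$. Distinct $\mathcal{T}_j$ live on pairwise disjoint vertex sets (a vertex sits at a single site), and the branching Markov property applied at the successive arrival times $\tau_j$ renders the events $S_j:=\{|\mathcal{T}_j|=\infty\}$, $j\in J_\epsilon$, conditionally independent given $\omega$. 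Therefore
\beqnn
\mathbb{P}_\omega\Bigl(\bigcap_{j\in J_\epsilon}S_j^c\Bigr)\leq\prod_{j\in J_\epsilon}(1-q_\epsilon)=0,
\eeqnn
so $\mathbb{P}_\omega$-a.s.\ some $S_{j^*}$ holds; on this event $\mathcal{T}_{j^*}$ keeps a particle at $j^*$ in every generation $n\geq\tau_{j^*}$, giving $M_n\leq j^*$ for all such $n$ and therefore $M\leq j^*<\infty$. Integrating over $\omega$ yields $M<\infty$ $\mathbb{P}$-a.s.

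The main obstacle is the conditional independence assertion used in the third stage. The $\mathcal{T}_j$ do live on disjoint vertex sets by construction, but the seed $\xi_j$ is itself determined by the random evolution at sites strictly less than $j$, so the $S_j$ are not obviously independent. The clean way to handle this is to reveal the process and environment in increasing order of $j$: given everything up to $\tau_j$, the descendant tree of $\xi_j$ is a fresh BRWiRE started at $j$, of which $\mathcal{T}_j$ is the ``stay at $j$'' restriction, while the complementary descendants of $\xi_j$ that leave $j$ are disjoint from $\mathcal{T}_j$ and carry the future seeds $\xi_{j'}$ for $j'>j$; the same decomposition iterates and yields the required independence.
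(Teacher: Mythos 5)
Your proof is correct in outline but takes a genuinely different route from the paper. The paper first establishes a quenched 0--1 law for $\{M_n\to\infty\}$ (Lemma~\ref{le3.1.}), using stationarity and monotonicity of $i\mapsto P_{\theta^i\omega}(M_n\to\infty)$ together with the first-generation recursion $\pi_\omega=E_\omega[\pi_\omega^{Z_1}]$; it then needs only \emph{one} site $i_\omega$ with $m\omega_{i_\omega}>1$ and the positive survival probability of the stayer Galton--Watson process there to force $\pi_\omega=0$, after which monotonicity of $M_n$ gives the finite limit. You bypass the 0--1 law entirely by running infinitely many survival attempts at the i.i.d.-plentiful good sites $J_\epsilon$ and decoupling them; the price is that the decoupling becomes the crux of the proof, and your sketch of it is not quite right as stated. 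Conditioning on ``everything up to time $\tau_j$'' does not make the earlier events $S_{j'}$, $j'<j$, measurable, because the stayer trees $\mathcal{T}_{j'}$ keep evolving after time $\tau_j$; and the future seeds $\xi_{j'}$, $j'>j$, need not be descendants of $\xi_j$ at all (any line that later reaches $j$ can produce them). The correct filtration is indexed by \emph{site}, not time: let $\mathcal{G}_j$ be generated by the offspring numbers and displacement variables attached to all individuals located at sites $0,\dots,j-1$. Then each $S_{j'}$ with $j'<j$ is $\mathcal{G}_j$-measurable (it is a function of the variables of individuals sitting at site $j'$, given the seed $\xi_{j'}$ which is determined by sites $<j'$), while $\xi_j$ is $\mathcal{G}_j$-measurable and $P_\omega(S_j\mid\mathcal{G}_j)=1-q(\omega_j)$ since $S_j$ uses only fresh randomness at site $j$; iterating gives $P_\omega\bigl(\bigcap_{j\in J_\epsilon,\,j\le J}S_j^c\bigr)\le(1-q_\epsilon)^{|J_\epsilon\cap[0,J]|}\to0$. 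One further small point: a mean offspring exceeding $1+m\epsilon$ does not by itself give a survival probability bounded below uniformly over offspring laws; here it does because the stayer laws are binomial thinnings of the fixed law $(p_k)$, stochastically monotone in $\omega_j$, so the bound at parameter $\tfrac1m+\epsilon$ serves for all $j\in J_\epsilon$. With these repairs your argument is complete and is arguably more self-contained than the paper's, at the cost of a more delicate measurability bookkeeping; the paper's 0--1 law, by contrast, does all the heavy lifting once and is reused elsewhere.
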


\begin{remark}
The result of Theorem \ref{th2.1.} is consistent with Theorem 3.1* in \cite{BG09}, indeed  the concept of local survival in \cite{BG09} is equivalent to finiteness of the limit random variable $M$. Our proof is based on the 0-1 law and with a full classification in Lemma \ref{le3.2.}.
\end{remark}

\begin{theorem}\label{th2.2.}(Subcritical case)~~If $\omega_{max}<\frac{1}{m}$ and $ m^{(2)}:=\displaystyle\sum_{k=0}^{\infty}k^{2}p_{k}<\infty$ then there exists a constant $\gamma>0$ satisfies
\beqlb\label{t2}
\frac{M_{n}}{n}\to\gamma ~~~~\mathbb{P}\text{-a.s.}
\eeqlb
where $\gamma=\frac{1}{\mathbb{E}\left[\frac{1}{1-m\omega_{0}e^{t_{+}}}\right]}$, and $t_{+}=\sup\big\{t<\log\frac{1}{m\omega_{max}}: \mathbb{E}[\frac{t}{1-m\omega_{0}e^{t}}]-\mathbb{E}[\log\frac{m(1-\omega_{0})e^{t}}{1-m\omega_{0}e^{t}}]\leq0\big\}.$
\end{theorem}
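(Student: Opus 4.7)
The overall strategy is to recast our BRWiRE-in-location as a BRWiRE-in-time and then apply the limit theorem of Huang and Liu \cite{HLL14,HL14}. The key observation is that at every position $i$, conditional on $\omega_i$, the sub-system of particles that remain at $i$ is a subcritical Galton--Watson process of mean $m\omega_i\leq m\omega_{\max}<1$ and so dies out almost surely. Call a particle a \emph{founder at level $i$} if its parent was at position $i-1$; tracking only the founders produces a tree $\mathcal{T}$ whose generation index $k$ is the spatial level, and we attach to the $j$-th founder at level $k$ its arrival time $T_k^{(j)}$ as a spatial position in $\mathcal{T}$. Because the $\omega_i$'s are i.i.d., $\mathcal{T}$ together with these arrival-time increments is precisely a branching random walk in an i.i.d.\ random environment indexed by time; the spatial environment of the original walk has become the temporal environment of the new one.

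Given $\omega_i$, the colony initiated by a founder at level $i$ has, at the $s$-th generation after its arrival, an average of $(m\omega_i)^s$ members, so the expected number of its jump-offspring in the following generation is $m(1-\omega_i)(m\omega_i)^s$. Summing these contributions with Laplace weight $e^{\lambda s}$ yields, for $\lambda<\log\frac{1}{m\omega_i}$,
\beqnn
\hat\phi(\lambda,\omega_i)\coloneqq \mathbb{E}\Big[\sum_{\ell}e^{\lambda(T_\ell-T)}\,\Big|\,\omega_i\Big]=\frac{m(1-\omega_i)e^\lambda}{1-m\omega_i e^\lambda},
\eeqnn
which is the quenched offspring Laplace transform of $\mathcal{T}$. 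Moreover, the event $\{M_n\geq k\}$ is equivalent to every colony at levels $0,\dots,k-1$ being extinct by time $n$, which in turn is $\tau_k\coloneqq \max_j T_k^{(j)}\leq n$. Hence $\tau_k$ is the rightmost position of $\mathcal{T}$ at generation $k$, and the limit $M_n/n\to\gamma$ is equivalent to $\tau_k/k\to 1/\gamma$.

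Applying Huang and Liu's theorem for the maximal position of a BRWiRE-in-time to $\mathcal{T}$ then gives $\tau_k/k\to v^*\coloneqq \inf_{\lambda>0}\Lambda(\lambda)/\lambda$ with $\Lambda(\lambda)=\mathbb{E}[\log\hat\phi(\lambda,\omega_0)]$. Since $\Lambda$ is convex, $\Lambda(0)>0$ (because $m>1$ forces $\hat\phi(0,\omega)>1$ a.s.), and $\Lambda'(\lambda)=\mathbb{E}[1/(1-m\omega_0 e^\lambda)]$ is finite and increasing on $(0,\log\frac{1}{m\omega_{\max}})$, the infimum is attained at the unique interior $\lambda^*$ satisfying the tangency condition $\lambda^*\Lambda'(\lambda^*)=\Lambda(\lambda^*)$. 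Reading the theorem's definition of $t_+$ as the largest $\lambda$ with $\lambda\Lambda'(\lambda)-\Lambda(\lambda)\leq 0$ identifies $\lambda^*=t_+$; consequently $v^*=\Lambda'(t_+)$ and $\gamma=1/v^*=1/\mathbb{E}[1/(1-m\omega_0 e^{t_+})]$, as asserted.

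The main technical obstacle will be checking the hypotheses of Huang and Liu's theorem for $\mathcal{T}$: an $L\log L$ or second-moment bound on the number of founders produced by one colony, which the total-progeny variance of a subcritical Galton--Watson process inherits from $m^{(2)}<\infty$; non-degeneracy of the additive martingale $W_k(t_+)=\sum_{|u|=k}e^{t_+ T^{(u)}}\big/\prod_{i<k}\hat\phi(t_+,\omega_i)$; and confirmation that $t_+$ lies in the interior of the convergence strip, which uses $\omega_{\max}<1/m$ to keep $\hat\phi$ locally finite near the extremal environments. Once these ingredients are in place, the chain $M_n/n\to 1/v^*=\gamma$ finishes the proof.
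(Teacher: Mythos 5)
Your proposal follows essentially the same route as the paper: both convert the location-indexed environment into a time-indexed one via Bramson's ``branching process within a branching process'' (your founders/arrival-time tree $\mathcal{T}$ is exactly the paper's $(Y_j,\tau_{jl})$ construction, with the same quenched Laplace transform $\frac{m(1-\omega_0)e^{t}}{1-m\omega_0 e^{t}}$), both invert the maximal arrival time against $M_n$, and both invoke Huang--Liu's limit theorem, with $m^{(2)}<\infty$ entering through the second moment of the total progeny of the subcritical colony. The approaches coincide, so no further comparison is needed.
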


\begin{theorem}\label{th2.3.}(Critical case)~~If $\omega_{max}=\frac{1}{m}$ and $\eta(\omega_{0}=\frac{1}{m})>0$ then
\beqnn
M_{n}\to\infty~~and~~\frac{M_{n}}{n}\to0 ~~~~\mathbb{P}\text{-a.s.}
\eeqnn
\end{theorem}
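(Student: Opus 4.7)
I would prove the two assertions separately. For $M_n\to\infty$, each displacement is $0$ or $1$, so $M_n$ is non-decreasing, and it suffices to show that for every fixed $k$, site $k$ is almost surely empty from some finite time onward. Argue by induction on $k$. The descendants of the initial particle that never leave site $0$ form a Galton--Watson tree with offspring mean $m\omega_0\le m\omega_{\max}=1$; combined with $p_0=0$ and $p_1<1$, this tree is a.s.\ finite, so only finitely many particles are ever passed from site $0$ to site $1$. Inductively, site $k$ receives only finitely many founders over its whole history, each founder starts an (at most) critical Galton--Watson subtree at site $k$, and the maximum of finitely many a.s.\ finite lifetimes is finite. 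Intersecting a countable family of probability-one events gives $M_n\to\infty$ a.s.

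For $M_n/n\to 0$, I would use a monotone coupling with subcritical environments together with Theorem \ref{th2.2.}. Given $\epsilon>0$, set $\omega^\epsilon_i:=\min(\omega_i,\tfrac{1}{m}-\epsilon)$; this sequence is still i.i.d., and $\omega^\epsilon_{\max}=\tfrac{1}{m}-\epsilon<\tfrac{1}{m}$ places it in the subcritical regime of Theorem \ref{th2.2.}. Build both BRWiREs on a common Ulam--Harris tree with a shared offspring count for every potential particle and a single $U_l\sim\mathrm{Uniform}[0,1]$ attached to each potential child $l$; the child stays at its parent's site $i$ in the $\omega$-process iff $U_l\le\omega_i$, and at its parent's site $i'$ in the $\omega^\epsilon$-process iff $U_l\le\omega^\epsilon_{i'}$. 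A generation-by-generation induction yields $X^{\omega^\epsilon}(l)\ge X^\omega(l)$ for every label $l$: when the two parents already differ in position the inequality is preserved, and when they coincide at some site $i$ then $\omega^\epsilon_i\le\omega_i$, so the $\omega^\epsilon$-child moves whenever the $\omega$-child does. Taking minima over labels in generation $n$ gives $M_n(\omega)\le M_n(\omega^\epsilon)$, and Theorem \ref{th2.2.} applied to $\omega^\epsilon$ gives $M_n(\omega^\epsilon)/n\to\gamma^\epsilon$ a.s., so $\limsup_n M_n/n\le\gamma^\epsilon$ a.s.\ for every $\epsilon>0$.

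The final step---and the main obstacle---is to show $\gamma^\epsilon\to 0$ as $\epsilon\downarrow 0$. The admissible interval $\{t<\log(1/(m\omega^\epsilon_{\max}))\}$ used in the definition of $t_+^\epsilon$ contracts to $\{t<0\}$, and a careful inspection of the constraint defining $t_+^\epsilon$ should force $t_+^\epsilon$ to approach the upper endpoint. The hypothesis $\eta(\omega_0=\tfrac{1}{m})>0$ is essential here: it places a positive atom of $\omega^\epsilon_0$ at $\tfrac{1}{m}-\epsilon$ on which $m\omega^\epsilon_0\,e^{t_+^\epsilon}$ can be pushed arbitrarily close to $1$, driving $\mathbb{E}[1/(1-m\omega^\epsilon_0 e^{t_+^\epsilon})]\to\infty$ and hence $\gamma^\epsilon\to 0$. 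A subsidiary technical point is that Theorem \ref{th2.2.} carries the second-moment assumption $m^{(2)}<\infty$, which is not stated in Theorem \ref{th2.3.}; either this must be inherited as a standing hypothesis, or a finer coupling must be devised to avoid it.
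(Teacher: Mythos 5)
Your induction-on-sites argument for $M_{n}\to\infty$ is valid (the paper instead deduces this from the $0$--$1$ law in Lemma \ref{le3.2.}(ii), which already covers $\omega_{max}\le\frac{1}{m}$), and your route to $\frac{M_{n}}{n}\to 0$ is genuinely different from the paper's. The coupling itself and the monotonicity $M_{n}(\omega)\le M_{n}(\omega^{\epsilon})$ are sound, and the step you call the main obstacle can in fact be closed: the proof of Theorem \ref{th2.2.} gives $t_{+}^{\epsilon}>0$, while by definition $t_{+}^{\epsilon}<\log\frac{1}{m\omega^{\epsilon}_{max}}=-\log(1-m\epsilon)$, so $t_{+}^{\epsilon}\to 0^{+}$ (not to a negative limit, as your phrase ``contracts to $\{t<0\}$'' suggests), and on the event $\{\omega_{0}=\frac{1}{m}\}$, which has positive $\eta$-mass, $1-m\omega^{\epsilon}_{0}e^{t_{+}^{\epsilon}}<1-(1-m\epsilon)=m\epsilon$; hence $\mathbb{E}\big[\frac{1}{1-m\omega^{\epsilon}_{0}e^{t_{+}^{\epsilon}}}\big]\ge \eta(\omega_{0}=\frac{1}{m})/(m\epsilon)\to\infty$ and $\gamma^{\epsilon}\to 0$.

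The genuine gap is the one you flag at the end and do not resolve: your argument routes through Theorem \ref{th2.2.}, which carries the hypothesis $m^{(2)}<\infty$, whereas Theorem \ref{th2.3.} assumes no such thing. As written you therefore prove only a weaker statement (the critical case under an extra second-moment condition). The paper avoids this entirely by going in the opposite direction: it reuses the transfer to a BRWiRE in time and bounds $L_{n}\ge\sum_{i=0}^{n-1}T_{i}$, where the $T_{i}$ are i.i.d.\ extinction times of the Galton--Watson processes of particles staying at site $i$ (mean offspring $m\omega_{i}$). Since $\eta(\omega_{0}=\frac{1}{m})>0$ forces $\mathbb{E}T_{0}=\infty$ (a nondegenerate critical Galton--Watson process has infinite expected extinction time), the strong law for nonnegative i.i.d.\ variables with infinite mean gives $\frac{L_{n}}{n}\to\infty$, and Lemma \ref{LeR} yields $\frac{M_{n}}{n}\to 0$ with no moment assumption on the offspring law. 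Unless you can remove $m^{(2)}<\infty$ from your coupling argument, you should switch to a lower bound of this type for the critical case.
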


\

\begin{remark} The proof of Theorem \ref{th2.1.} is based on the 0-1 law and the nondecreasing of $M_{n}$, which will figure out in section \ref{p1}. In section \ref{p2}, we will  prove Theorem \ref{th2.2.} and Theorem \ref{th2.3.}, where  a key step is to transfer our  branching random walks in random environment (in location) to  branching random walks in random environment (in time), by use of  Bramson's ``branching processes within a branching process" (\cite{BR78}), and then applying the result of Huang and Liu (\cite{HLL14}, \cite{HL14}).

\end{remark}

\begin{remark}  Hammersley (\cite{HA74}), Kingman (\cite{KI75}), Biggins (\cite{BI76}) gave the first-order limit of the minimal position of a branching random walk in non-random environment. That is
\beqlb\label{t3}\frac{M_{n}}{n}\to\gamma_{1}
 \eeqlb
almost surely, where $\gamma_{1}=\inf\{a:\mu(a)\geq1\}$ with $\mu(a)=\inf\{e^{\theta a}\phi(\theta):\theta\geq0\}$, and $\phi(\theta)=E\displaystyle\sum_{|x|=1}e^{-\theta V(x)}$ satisfies $\phi(\theta)<\infty$ for some $\theta>0$ $(V(x)$ denotes the position of $x)$.

We will show that our result in (\ref{t2}) of Theorem  \ref{th2.2.} is consistent with the classical Hammersley-Kingman-Biggins Theorem for the branching random walk in non-random environment, i.e., when the displacement transition probability in (\ref{dis}) is a constant $\omega_{0}\equiv p<\frac{1}{m}$. To this end, on the one hand, $\phi(\theta)=mp+m(1-p)e^{-\theta}$, and
\beqnn
\mu(a)=
\begin{cases}
m,& \text{if $a\geq 1-p$};\\
\frac{mp}{1-a}[\frac{(1-p)(1-a)}{pa}]^{a},& \text{if $0<a<1-p$},\\
\end{cases}
\eeqnn
by Hammersley-Kingman-Biggins Theorem,
\beqlb\label{t31}
\gamma_{1}=\inf\Big\{a>0: \frac{mp}{1-a}[\frac{(1-p)(1-a)}{pa}]^{a}\geq1\Big\}.
\eeqlb
On the other hand, if $\omega_{0}\equiv p<\frac{1}{m}$, by use of the result in Theorem \ref{th2.2.},
\beqnn
t_{+}=\sup\Big\{t<\log\frac{1}{mp}: \frac{t}{1-mpe^{t}}-\log\frac{m(1-p)e^{t}}{1-mpe^{t}}\leq0\Big\},
\eeqnn
\beqnn
1-mpe^{t_{+}}&=& \inf\Big\{1-mpe^{t}>0: \frac{t}{1-mpe^{t}}-\log\frac{m(1-p)e^{t}}{1-mpe^{t}}\leq0\Big\}\\
&=& \inf\Big\{a>0: \frac{\log\frac{1-a}{mp}}{a}-\log\frac{(1-p)(1-a)}{ap}\leq0\Big\},
\eeqnn
thus
\beqlb\label{t21}
\gamma= \inf\Big\{a>0: \frac{\log\frac{1-a}{mp}}{a}-\log\frac{(1-p)(1-a)}{ap}\leq0\Big\}.
\eeqlb
Since $\frac{\log\frac{1-a}{mp}}{a}-\log\frac{(1-p)(1-a)}{ap}\leq0$ is equivalent to $\frac{mp}{1-a}[\frac{(1-p)(1-a)}{pa}]^{a}\geq1$, we get that $\gamma_{1}=\gamma$ by (\ref{t31}) and (\ref{t21}).

\end{remark}

\begin{remark} Compared with the model discussed by Devulder (\cite{DA07}), our model is simpler  but we give the exact limit value of $\frac{M_{n}}{n}$. It should be an interesting task to investigate  the accurate velocity of limit of $\frac{m_{n}^{*}}{n}$ for that of Devulder (\cite{DA07}), but a more complicated Bramson's ``branching processes within a branching process" (\cite{BR78}) should be constructed at first, which we are now going on.
\end{remark}

\begin{remark} Compared with the (one particle) RWRE driven by (\ref{dis}), the minimal position of the ``branching system" goes slowly.
Recall (\cite{ZE04}) the velocity of the RWRE being $\frac{1}{\mathbb{E}\frac{1}{1-\omega_{0}}}$, which is strictly bigger than $\gamma=\frac{1}{\mathbb{E}\frac{1}{1-m\omega_{0}e^{t_{+}}}}$ in (\ref{t2}) of Theorem  \ref{th2.2.}, the first-order limit of the minimal position of the BRWiRE (in location) because of  $t_{+}>0$ and $m>1$, as it should be.
\end{remark}

\section{0-1 law and the proof of  Theorem~\ref{th2.1.} \label{p1}}
First we give a classification criterion for the supercritical case.
\begin{lemma}\label{le3.1.}~~Let $\pi_{\omega}=P_{\omega}(M_{n}\to\infty)$, then for $\eta$-a.e. $\omega$, $\pi_{\omega}=0$ or $\pi_{\omega}=1$.
\end{lemma}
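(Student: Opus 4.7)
My plan is to derive a one-step functional equation for $\pi_{\omega}$, use it to prove the a.s.\ monotonicity $\pi_{\omega}\le \pi_{\theta\omega}$ under the environmental shift $\theta\omega:=(\omega_{i+1})_{i\ge 0}$, and then close the argument by the stationarity of $\eta$. Conditioning on the first generation, let $N_{0}$ be the number of children of the initial particle that sit at $0$ and $N_{1}=Z_{1}-N_{0}$ the number that sit at $1$; under $P_{\omega}$, $Z_{1}\sim(p_{k})_{k\ge0}$ and $N_{0}\mid Z_{1}\sim\mathrm{Bin}(Z_{1},\omega_{0})$. The $Z_{1}$ subtrees are conditionally independent: a subtree rooted at $0$ is a copy of the BRWiRE in environment $\omega$, while a subtree rooted at $1$, once its root is recentered to $0$, is a copy of the BRWiRE in environment $\theta\omega$ (since particles never move backwards). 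Because $\{M_{n}\to\infty\}$ for the whole tree is the intersection of the analogous events for the subtrees, one obtains the recursion
\[
\pi_{\omega}=\mathbb{E}\!\left[\pi_{\omega}^{N_{0}}\pi_{\theta\omega}^{N_{1}}\right]=f\!\left(\omega_{0}\pi_{\omega}+(1-\omega_{0})\pi_{\theta\omega}\right),
\]
where $f(s)=\sum_{k\ge0}p_{k}s^{k}$ is the offspring generating function.

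The hypotheses $p_{0}=0$, $p_{1}<1$ and $m=f'(1)>1$ imply that $f$ is strictly convex on $[0,1]$ with $f(0)=0$ and $f(1)=1$, so $f(s)\le s$ on $[0,1]$ with strict inequality on $(0,1)$. Substituting this bound into the recursion and using $1-\omega_{0}>0$ yields the a.s.\ monotonicity
\[
\pi_{\omega}\le \pi_{\theta\omega}.
\]
Moreover, when $\pi_{\omega}\in(0,1)$ the convex combination $s:=\omega_{0}\pi_{\omega}+(1-\omega_{0})\pi_{\theta\omega}$ automatically lies in $(0,1)$, because $\omega_{0}\in(\delta,1-\delta)$: $s>0$ follows from $\omega_{0}\pi_{\omega}>0$, and $s<1$ from $\omega_{0}(1-\pi_{\omega})>0$. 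Hence $f(s)<s$ strictly, which after rearrangement gives the strict inequality $\pi_{\omega}<\pi_{\theta\omega}$ on the set $\{\omega:\pi_{\omega}\in(0,1)\}$.

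Finally, since $(\omega_{i})_{i\ge0}$ is i.i.d., $\eta$ is $\theta$-invariant, so $\pi_{\omega}$ and $\pi_{\theta\omega}$ share the same distribution (and in particular the same mean) under $\eta$. Combined with $\pi_{\omega}\le\pi_{\theta\omega}$ $\eta$-a.s., any positive $\eta$-measure of $\{\pi_{\omega}\in(0,1)\}$ would force $\mathbb{E}_{\eta}\pi_{\omega}<\mathbb{E}_{\eta}\pi_{\theta\omega}$, a contradiction. We conclude that $\eta(\pi_{\omega}\in(0,1))=0$, i.e.\ $\pi_{\omega}\in\{0,1\}$ for $\eta$-a.e.\ $\omega$. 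The only subtle point is the derivation of the functional equation, in particular identifying the environment seen by the subtree rooted at position $1$ as $\theta\omega$; once this is in hand, the argument reduces to strict convexity of $f$ together with stationarity of the environment.
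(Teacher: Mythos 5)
Your proof is correct and rests on the same core idea as the paper's: a decomposition over the first generation combined with stationarity of the environment and convexity of the offspring generating function. The one genuine difference is in how the two ingredients are assembled. The paper first asserts $\pi_{\omega}\le\pi_{\theta\omega}$ via ``a simple coupling argument,'' combines this with stationarity to conclude $\pi_{\theta^{i}\omega}=\pi_{\omega}$ for all $i$, and only then writes the first-generation recursion, which collapses to the fixed-point equation $\pi_{\omega}=f(\pi_{\omega})$; the conclusion follows because the only fixed points of $f$ in $[0,1]$ are $0$ and $1$. You instead derive the two-variable functional equation $\pi_{\omega}=f\left(\omega_{0}\pi_{\omega}+(1-\omega_{0})\pi_{\theta\omega}\right)$ first and extract the monotonicity $\pi_{\omega}\le\pi_{\theta\omega}$ from it algebraically via $f(s)\le s$, with strict inequality on $\{\pi_{\omega}\in(0,1)\}$, then close with the equality of means under the shift. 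This buys you a self-contained replacement for the unproved coupling step (which is the least explicit point in the paper's argument), at the cost of a slightly longer derivation; both routes use exactly the hypotheses $p_{0}=0$, $p_{1}<1$ and $\omega_{0}\in(\delta,1-\delta)$, and your identification of the environment seen by a subtree rooted at position $1$ as $\theta\omega$ is the same observation the paper encodes as $P^{i}_{\omega}=P_{\theta^{i}\omega}$.
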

\begin{proof}
We denote by $P_{\omega}^{x}$ the law of the particle system conditionally on the environment $\omega$ and start from the position $x$ instead of $0$. $\theta$ is the shift operator, given by $(\theta\omega)_{i}\coloneqq\omega_{i+1}$. Then we have
\beqnn
P_{\omega}^{i}(M_{n}\to\infty)=P_{\theta^{i}\omega}(M_{n}\to\infty).
\eeqnn
Since $\omega_{i}$ is i.i.d, then sequence $\big\{P_{\theta^{i}\omega}(M_{n}\to\infty)\big\}_{i\in \mathbb{Z}}$ is a stationary sequence. Moreover, by a simple coupling argument, it is also a nondecreasing sequence. Thus it is constant, i.e.,
for $\eta$-a.e. $\omega$,
\beqlb\label{sta}
P_{\theta^{i}\omega}(M_{n}\to\infty)=P_{\omega}(M_{n}\to\infty),~\forall~i\in \mathbb{Z}.
\eeqlb
Let $M_{n}^{(j)}$ be the minimal displacement starting from the  $j^{th}$ particle in the first generation, $j=1,\cdots,Z_{1}$. Then
\beqnn
\{M_{n}\to\infty\}=\displaystyle\bigcap_{j=1}^{Z_{1}}\{M_{n}^{(j)}\to\infty\}.
\eeqnn
Let $N_{i}{(j)}$ be the number of particles at position $j$ at time $i$. Since $M_{n}^{(j)}$ are independent for different $j$, we have
\beqnn
P_{\omega}(M_{n}\to\infty)&=& E_{\omega}\Big[P_{\omega}^{1}(M_{n}\to\infty)^{N_{1}{(1)}}P_{\omega}^{0}(M_{n}\to\infty)^{N_{1}{(0)}}\Big]\\
&=& E_{\omega}P_{\omega}(M_{n}\to\infty)^{N_{1}{(1)}+N_{1}{(0)}}\\
&=& E_{\omega}P_{\omega}(M_{n}\to\infty)^{Z_{1}},
\eeqnn
the second equality is by (\ref{sta}). Recall the assumption $(\ref{2.3})$ we know that $Z_{1}\geq1$ and $Z_{1}>1$ with positive probability. Then $P_{\omega}(M_{n}\to\infty)=$$0$ or $1$, i.e.,
$
\pi_{\omega}=0~or~1,~\eta\text{-}a.e..
$
\qed
\end{proof}

\begin{lemma}\label{le3.2.}~~(i)~If $\omega_{max}>\frac{1}{m}$ then $\mathbb{P}(M_{n}\to\infty)=0$.

~~~~~~~~~~~~~~~(ii)~If $\omega_{max}\leq\frac{1}{m}$ then $\mathbb{P}(M_{n}\to\infty)=1$.
\end{lemma}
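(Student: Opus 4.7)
The plan is to deduce part (i) from Lemma~\ref{le3.1.} by showing that, for $\eta$-a.e.\ environment $\omega$, the event $\{M_n\text{ bounded}\}$ has strictly positive $P_\omega$-probability; the $0$-$1$ dichotomy $\pi_\omega\in\{0,1\}$ then forces $\pi_\omega=0$, whence $\mathbb{P}(M_n\to\infty)=\int \pi_\omega\,\eta(d\omega)=0$. Since $\omega_{max}>1/m$ and the $\omega_i$ are i.i.d., there is $\eta$-a.s.\ a (random) site $i_0<\infty$ with $\omega_{i_0}>1/m$. Under $P_\omega$ I would first give a positive lower bound for the probability that some particle ever visits $i_0$: using $1-\omega_j\geq\delta$ for every $j$ together with $p_0=0$, one can select, at each of the first $i_0$ generations, a single offspring of a distinguished ancestor that moves to the right, giving $P_\omega$-probability at least $\delta^{i_0}$ that a lineage reaches $i_0$ by generation $i_0$.

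Once a particle sits at $i_0$, the branching property lets me restart there and consider the Galton-Watson process embedded in the BRWiRE consisting of those descendants that stay at $i_0$: a parent with $k$ BRWiRE-offspring contributes $\mathrm{Bin}(k,\omega_{i_0})$ such children, giving mean $m\omega_{i_0}>1$, so the embedded process is supercritical and survives with positive probability. On its survival event at least one particle sits at $i_0$ at every generation, so $M_n\leq i_0$ for all $n$. Combined with the previous step this gives $P_\omega(M_n\not\to\infty)>0$, which as noted above completes the proof of (i).

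For part (ii) the hypothesis $\omega_{max}\leq 1/m$ forces $m\omega_i\leq 1$ for every $i$, so the same embedded process at each site is (sub)critical; since its probability of producing no in-place offspring equals $\sum_k p_k(1-\omega_i)^k>0$, it goes extinct $P_\omega$-a.s.\ in both the subcritical and the critical case. I would then argue by induction on $i$ that the total number $N_i$ of particles ever occupying site $i$ is $\mathbb{P}$-a.s.\ finite: $N_0$ is the total progeny of the embedded (sub)critical tree at $0$, hence a.s.\ finite; and given $N_{i-1}<\infty$ a.s., the number of immigrants into $i$ is bounded by the total BRWiRE-offspring of the $N_{i-1}$ particles, which is an a.s.-finite sum of a.s.-finite random variables, so that $N_i$ is itself a sum of a.s.-finitely many independent (sub)critical total progenies and hence a.s.\ finite. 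Since $N_i<\infty$ a.s.\ for every $i$, eventually no particle lies at site $i$, and because $M_n$ is non-decreasing this yields $M_n>i$ for all sufficiently large $n$, whence $M_n\to\infty$ $\mathbb{P}$-a.s.

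The main obstacle is the borderline critical case $\omega_i=1/m$ arising in (ii): the embedded process then has infinite expected total progeny, so the inductive step cannot be carried out through first-moment estimates and must instead invoke almost-sure extinction of critical Galton-Watson processes with strictly positive probability of producing no offspring (here $\sum_k p_k(1-\omega_i)^k>0$). Part (i) is technically lighter but requires a careful use of the branching property to restart a fresh embedded process once a single particle has landed at a site with $\omega_{i_0}>1/m$.
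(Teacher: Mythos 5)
Your argument is correct. Part (i) is essentially the paper's proof: both locate the first site $i_0$ (the paper's $i_\omega$) with $m\omega_{i_0}>1$, reach it with positive quenched probability, observe that the in-place descendants there form a supercritical Galton--Watson process whose survival keeps $M_n\leq i_0$ forever, and then invoke the $0$--$1$ dichotomy of Lemma~\ref{le3.1.} to conclude $\pi_\omega=0$; your explicit $\delta^{i_0}$ lower bound for reaching $i_0$ is a detail the paper leaves implicit. For part (ii) you take a genuinely different route. The paper argues by contradiction: by monotonicity of $M_n$, non-convergence to infinity forces $M_n$ to freeze at some level $p_\omega$, which would require the in-place process at $p_\omega$ to survive forever and hence be supercritical, contradicting $\omega_{max}\leq 1/m$; it then needs Lemma~\ref{le3.1.} once more to upgrade ``$P_\omega(M_n\to\infty)>0$ a.e.'' to ``$=1$ a.e.''. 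You instead prove directly, by induction over sites, that the total number of particles ever visiting each site is a.s.\ finite (a.s.-finitely many immigrants, each spawning an a.s.-finite (sub)critical in-place total progeny), so every site is eventually vacated and $M_n\to\infty$ without any appeal to the $0$--$1$ law. Your version is more self-contained and, usefully, makes explicit the point the paper glosses over: at critical sites $\omega_i=1/m$ the expected total progeny is infinite, so one must rely on almost-sure extinction of a nondegenerate critical Galton--Watson process (guaranteed here by $\sum_k p_k(1-\omega_i)^k>0$) rather than on first-moment bounds. The paper's route is shorter but leans on the monotonicity-plus-dichotomy machinery already set up for part (i).
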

\begin{proof}
(i) Let $i_{\omega}\coloneqq\min\{j\geq0: m\omega_{j}>1\}$, $\mathcal{D}=\{\omega: P_{\omega}(i_{\omega}<\infty)=1\}$, $N_{i}(j)$ is the number of particles at position $j$ at time $i$. If $\omega_{max}>\frac{1}{m}$, then $\mathbb{P}(\mathcal{D})=1$. For every $\omega\in\mathcal{D}$, there exists $N_{\omega}$ satisfies
\begin{eqnarray}\label{3.1}
1-\pi_{\omega}&\geq& P_{\omega}(M_{n}=i_{\omega},~\forall~n\geq N_{\omega})\nonumber\\
&=& P_{\omega}\big(\displaystyle\lim_{n\to\infty}N_{n}(i_{\omega})>0\big)\nonumber\\
&>& 0 .
\end{eqnarray}
The last inequality is due to the fact that when a particle reaches $i_{\omega}$, this particle and its descendants which stay at $i_{\omega}$ form a Galton-Watson process with mean offspring $m\omega_{i_{\omega}}>1$, so it is a supercritical branching process and has positive probability to exist forever.

From Lemma~\ref{le3.1.} we know that $\pi_{\omega}=0$ or $1$, combined with (\ref{3.1}) we deduce that for any $\omega\in\mathcal{D}$, $\pi_{\omega}=0$, then
\beqnn
\mathbb{P}(M_{n}\to\infty)=\mathbb{E}\pi_{\omega}=0.
\eeqnn

(ii) When $\omega_{max}\leq\frac{1}{m}$, we need to prove that for $\mathbb{P}$-$a.e.$ $\omega$, $P_{\omega}(M_{n}\to\infty)=1$. Let $B\coloneqq\{\omega: P_{\omega}(M_{n}\to\infty)=0\}$. Suppose that $\mathbb{P}(B)>0$. Since
\beqnn
\{M_{n}\not\to\infty\}=\displaystyle\bigcup_{p}\big\{M_{n}\to p\big\}=\displaystyle\bigcup_{p}\displaystyle\bigcup_{M}\big\{n\geq M: M_{n}=p\big\},
\eeqnn
there exists $p_{\omega},~M_{\omega}$ such that
\beqnn
P_{\omega}(M_{n}=p_{\omega}: n\geq M_{\omega})>0.
\eeqnn
As a result we obtain that the sub-branching process which stays at $p_{\omega}$ is supercritical, that is to say $m\omega_{p_{\omega}}>1$, but $m\omega_{p_{\omega}}>1$ contradicts to the condition that $\omega_{max}\leq\frac{1}{m}$. So the assumption can not be true. Combined with Lemma~\ref{le3.1.} gives
\beqnn
P_{\omega}(M_{n}\to\infty)=1~\text{is true for}~\mathbb{P}\text{-}a.e.~\omega.
\eeqnn
Hence, $\mathbb{P}(M_{n}\to\infty)=1$.
\qed
\end{proof}

Lemma~\ref{le3.2.} tells us that only when $\omega_{max}>\frac{1}{m}$ can the limit of the minimal position of branching random walk be finite. In this case, we call the branching random walk supercritical.\\

\begin{proof} \textbf{of Theorem~\ref{th2.1.}} From Lemma~\ref{le3.2.} (i) we know that if $\omega_{max}>\frac{1}{m}$, $P_{\omega}(M_{n}\to\infty)=0$ $a.s.$ Since $M_{n}$ is nondecreasing, for any $\omega\in\mathcal{D}$ (defined in Lemma~\ref{le3.2.}), there exists an almost surely finite random variable $M(\omega)$ such that $M_{n}(\omega)\to M(\omega)$ $P_{\omega}$-$a.e.$, i.e.,
\beqnn
P_{\omega}\Big(M_{n}(\omega)\to M(\omega)\Big)=1,~\forall~\omega\in\mathcal{D}.
\eeqnn
For any $\omega\in\mathcal{D}^{c}$, let $M(\omega)=0$. Then we have $\mathbb{P}(M_{n}\to M)=\mathbb{E}P_{\omega}(M_{n}\to M)=1$ and $\mathbb{P}(M<\infty)=1$.
\qed
\end{proof}

\section{Proof of  Theorem~\ref{th2.2.} and  Theorem~\ref{th2.3.} \label{p2}}

 Firstly, when $\omega_{max}<\frac{1}{m}$, we can easily see that  $\displaystyle\liminf_{n\to\infty}\frac{M_{n}}{n}>0$,  $\mathbb{P}$-$a.s.$. Indeed, if we set $\rho\coloneqq(\omega_{max},\cdots,\omega_{max},\cdots)$, i.e., for any $i$, $(\rho)_{i}=\omega_{max}$, it is known (\cite{DH91}) that
\beqnn
P_{\rho}\Big(\displaystyle\lim_{n\to\infty}\frac{M_{n}}{n}=\gamma_{\rho}\Big)=1,
\eeqnn
where $\gamma_{\rho}>0$. By coupling method we conclude that for $\eta\text{-}a.e.~\omega$,
\beqnn
P_{\omega}\Big(\displaystyle\liminf_{n\to\infty}\frac{M_{n}}{n}>0\Big)\geq P_{\rho}\Big(\displaystyle\liminf_{n\to\infty}\frac{M_{n}}{n}>0\Big)=1.
\eeqnn

\subsection {From a BRWiRE (in location) to a BRWiRE  (in time)}

Inspired by the method of  proving the law of large numbers for random walk in random environment (\cite{ZE04}), for exploring the limit behavior of $\frac{M_{n}}{n}$,  we will transfer our  branching random walks in random environment (in location) to  branching random walks in random environment (in time), by use of  Bramson's ``branching processes within a branching process" (\cite{BR78}).

In the model of branching random walk (in non-random environment) considered by Bramson (\cite{BR78}) (and then by  Dekking and Host (\cite{DH91})), i.e.,  the displacement transition probability in (\ref{dis}) is constant.
 Bramson (\cite{BR78}) intelligently proposed that $\{Y_{j}\}$ is also a branching process, where $\{Y_{j}\}$ refers to the number of particles that jump from location $j-1$ to $j$ at some time, the generating function of $\{Y_{j}\}$ is $\phi_{Y}(s)=\phi_{Z}\Big((1-p)s+p\phi_{Y}(s)\Big)$, where $1-p$ is the probability of the particle jumps one step up and $p$ is the probability that the particle stays in place (here $\phi_{W}$ denotes the generating function of the first generation distribution $W_{1}$ of the branching process $\{W_{j}\}$). For details, we need to introduce some notations,

$\bullet$ $X(a_{1},\cdots,a_{k})$ represents the relative displacement of the $a_{k}^{th}$ individual of the $k^{th}$ generation with forbears $(a_{1}),(a_{1},a_{2}),\cdots,(a_{1},\cdots,a_{k-1})$.

$\bullet$ $S(a_{1},\cdots,a_{k})$ represents the position of individual $(a_{1},\cdots,a_{k})$. Accordingly,
$S(a_{1},\cdots,a_{k})=\displaystyle\sum_{i=1}^{k}X(a_{1},\cdots,a_{i})$, $M_{n}=\displaystyle\min_{a_{1},\cdots,a_{n}}S(a_{1},\cdots,a_{n})$.

$\bullet$ $I_{j}=\{(a_{1},\cdots,a_{n}): S(a_{1},\cdots,a_{n-1})=j-1,~S(a_{1},\cdots,a_{n})=j\}$.

$\bullet$ $Y_{j}=|I_{j}|$, the cardinality of $I_{j}$.

For any $\nu\in I_{j}$, $|\nu|$ denotes the generation of $\nu$.

Let $\tau_{j1}\leq\tau_{j2}\leq\tau_{j3}\leq\cdots$ denote the generation of all individuals in $I_{j}$ and rank them in ascending order. Denote by \beqnn
L_{j}\coloneqq \max\{|\nu|: \nu\in I_{j}\}
\eeqnn
the latest generation time that particles jump from $j-1$ to $j$.

Bramson has already proved that $\{Y_{j}\}$ is a branching process, where $j$ represents location information in our original process. But now we need to take a different perspective to view $j$ as time, and treat $|\nu|(\nu\in I_{j})$, information originally representing time, as the location information of the new branching random walk, that is to say, when we care about these quantities of $\tau_{ji}$, we get a new branching random walk. This new branching random walk can be considered as being constructed as follows,

$\bullet$ At time $0$, there is one particle $\varnothing$ at the origin;

$\bullet$ At time $1$, this particle splits into a random number $Y(\varnothing)$ particles, and these particles move to the position $\tau_{11}, \tau_{12}, \cdots, \tau_{1Y(\varnothing)}$, where $\tau_{1l}$ are integer-valued random variables (may not be independent of each other) and the distribution of the random vector $X(\varnothing)\coloneqq\big(Y(\varnothing), \tau_{11}, \tau_{12}, \cdots, \tau_{1Y(\varnothing)}\big)$ is $\xi_{0}=\xi(\omega_{0})$ (when given the environment $\omega$), which is determined by
\begin{eqnarray}\label{m0t}
m_{0}(t) &\coloneqq& E_{\omega}\Big[\displaystyle\sum_{l=1}^{Y(\varnothing)}e^{t\tau_{1l}}\Big]= E_{\omega}\Big[\displaystyle\sum_{i=1}^{\infty}Y(\varnothing,i)e^{ti}\Big]\nonumber \\
&=& \displaystyle\sum_{i=1}^{\infty}m^{i}\omega_{0}^{i-1}(1-\omega_{0})e^{ti} ,
\end{eqnarray}
where $Y(\varnothing,i)$ represents the number of particle $\varnothing$'s children which locate in position $i$.

$\cdots$ $\cdots$

$\bullet$ At time $n$, the particle $\nu$ ($|\nu|=n-1)$ located at position $k$ splits into a random number $Y(\nu)$ particles, and these particles move to $\tau_{n1}, \tau_{n2}, \cdots, \tau_{nY(\nu)}$, where the distribution of $(Y(\nu), \tau_{n1}-k, \tau_{n2}-k, \cdots, \tau_{nY(\nu)}-k)$ is $\xi_{n-1}=\xi(\omega_{n-1})$, which is determined by
\begin{eqnarray}\label{mnt}
m_{n-1}(t) &\coloneqq& E_{\omega}\Big[\displaystyle\sum_{l=1}^{Y(\nu)}e^{t(\tau_{nl}-k)}\Big]= E_{\omega}\Big[\displaystyle\sum_{i=1}^{\infty}Y(\nu,i)e^{ti}\Big]\nonumber \\
&=& \displaystyle\sum_{i=1}^{\infty}m^{i}\omega_{n-1}^{i-1}(1-\omega_{n-1})e^{ti} ,
\end{eqnarray}
where $Y(\nu,i)$ represents the number of particle $\nu$'s children which locate in position $k+i$.

$\bullet$ Iterating this procedure and we get a new branching random walk with a random environment in time. \\

If we use $Y(\nu,i)$ to denote the number of particle $\nu$'s children which locate in position $i+V(\nu)$ $(V(x)$ denotes the position of $x)$, then $Y(\nu)= \displaystyle\sum_{i=1}^{\infty}Y(\nu,i)$. By the structure of our model, we know that the descendants of $\varnothing$ that stay at 0 form a branching process with mean offspring $m\omega_{0}$, we use $\{N_n(0)\}$ and $\phi_{N(0)}(s)$ to denote this process and its generating function respectively. From its branching structure we have $\phi_{N(0)}(s)=\phi_{Z}(\omega_{0}s+1-\omega_{0})$.\\

The corresponding relationship between first two generations of the new branching random walk and the previous one is shown in the figure,
\begin{figure}[h]
\begin{center}
\includegraphics[width=16cm]{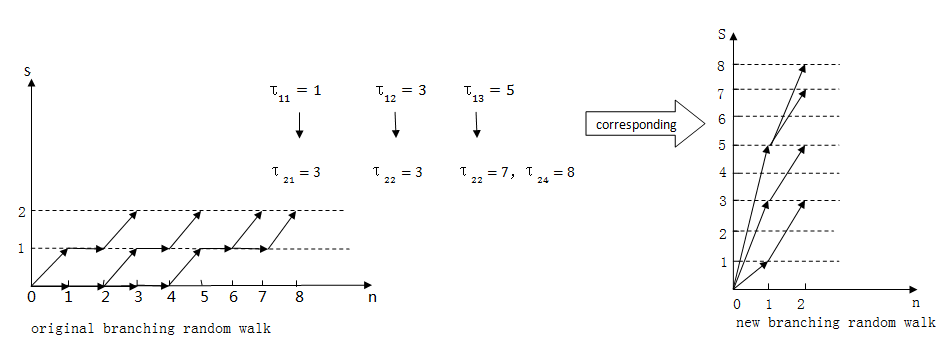}
\end{center}
\end{figure}

Then $Y_{j}$  represent the number of particles in generation $j$ of the BRWiRE (in time), with $S_{j; k}$, $k=1,\cdots, Y_j$, being the positions of these particles, and
\beqlb\label{L}
L_j:= \max_{1\leq k \leq Y_j} S_{j; k}.
\eeqlb

\subsection {Relationship between $M_{n}$ in (\ref{M}) and  $L_{n}$ in (\ref{L})}

\begin{lemma}\label{LeR} If $\displaystyle\lim_{n\to\infty}\frac{L_{n}}{n}=\alpha>0$,  ~$\mathbb{P}\text{-a.s.}$ then  $\displaystyle\lim_{n\to\infty}\frac{M_{n}}{n}=\frac{1}{\alpha}$, ~$\mathbb{P}\text{-a.s.}$.
\end{lemma}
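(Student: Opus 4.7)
The plan is to exploit a monotone duality between $M_{n}$ and $L_{j}$. Because each particle's trajectory is non-decreasing in position (displacements are $0$ or $1$), the minimal spatial position at time $n$ and the last generation at which level $j$ is attained should be inverse in nature. Concretely, I will establish
\[
\{M_{n}\geq j\}=\{L_{j}\leq n\}\quad\text{for all}\; n\geq 0,\; j\geq 1,
\]
and then read off $M_{n}=\sup\{j\geq 0: L_{j}\leq n\}$, whence $L_{M_{n}}\leq n<L_{M_{n}+1}$.

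One direction of the duality is essentially immediate: if $L_{j}>n$, then some particle makes a jump from $j-1$ to $j$ at a generation $t>n$, and its ancestor at generation $n$, having non-decreasing position, must have sat at some level $\leq j-1$, forcing $M_{n}<j$. The reverse direction is the delicate one. Suppose a particle $\mu$ at generation $n$ sits at position $k<j$; I must argue that some descendant of $\mu$ eventually makes a first crossing into level $j$ at a generation $>n$, whence $L_{j}>n$. Here the assumption $p_{0}=0$ ensures that the lineage of $\mu$ never goes extinct, and each $\omega_{i}<1$ guarantees that any surviving lineage eventually moves up past any finite level. Combined with the subcritical context of the present section, which by Lemma~\ref{le3.2.}(ii) yields $M_{n}\to\infty$ $\mathbb{P}$-a.s., every lineage reaches every level in finite time, which closes the duality.

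Once the duality is in place, the rest is a standard inverse-function manipulation. From $L_{M_{n}}\leq n<L_{M_{n}+1}$, dividing by $M_{n}$ gives
\[
\frac{L_{M_{n}}}{M_{n}}\;\leq\;\frac{n}{M_{n}}\;<\;\frac{L_{M_{n}+1}}{M_{n}+1}\cdot\frac{M_{n}+1}{M_{n}}.
\]
Since $L_{n}/n\to\alpha>0$ implies $L_{n}\to\infty$, the duality forces $M_{n}\to\infty$ $\mathbb{P}$-a.s. Applying the hypothesis $L_{n}/n\to\alpha$ at both ends of the sandwich then yields $n/M_{n}\to\alpha$, i.e.\ $M_{n}/n\to 1/\alpha$, $\mathbb{P}$-a.s. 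The main obstacle is the careful justification of the reverse duality, in particular verifying that every lineage sitting below level $j$ at time $n$ almost surely contributes some later crossing of $j$; after this point the limit computation is essentially bookkeeping.
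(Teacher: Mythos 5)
Your proof is correct and follows essentially the same route as the paper: both identify $M_{n}$ as the generalized inverse of $j\mapsto L_{j}$ via $L_{M_{n}}\leq n<L_{M_{n}+1}$ and then conclude by the standard sandwich argument. In fact you are more careful than the paper, which simply asserts the inequality $k_{n}\leq M_{n}<k_{n}+1$ from the definitions, whereas you spell out the duality $\{M_{n}\geq j\}=\{L_{j}\leq n\}$ and correctly flag the direction requiring $p_{0}=0$ and the eventual upward crossing of every lineage as the step needing justification.
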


\begin{proof} Take $k_{n}$ as a unique integer to satisfy,
\begin{eqnarray}\label{kni}
L_{k_{n}}\leq n<L_{k_{n}+1}.
\end{eqnarray}
Recalling the definition of $L_{n}$ we have $k_{n}\leq M_{n}<k_{n}+1$, i.e. $\frac{k_{n}}{n}\leq\frac{M_{n}}{n}<\frac{k_{n}+1}{n}$. As a consequence
\beqnn
\displaystyle\lim_{n\to\infty}\frac{M_{n}}{n}=\displaystyle\lim_{n\to\infty}\frac{k_{n}}{n}.
\eeqnn
Given the condition $\displaystyle\lim_{n\to\infty}\frac{L_{n}}{n}=\alpha$, then
\begin{eqnarray}\label{lkn}
\displaystyle\lim_{n\to\infty}\frac{L_{k_{n}}}{k_{n}}=\alpha~~~\text{and}~~~\displaystyle\lim_{n\to\infty}\frac{L_{k_{n}+1}}{k_{n}+1}=\alpha,
\end{eqnarray}
since $k_{n}\to\infty$ as $n\to\infty$. Combining (\ref{kni}) and (\ref{lkn}) we get
\beqnn
\displaystyle\lim_{n\to\infty}\frac{n}{k_{n}}\geq\alpha~~~\text{and}~~~\displaystyle\lim_{n\to\infty}\frac{n}{k_{n}+1}\leq\alpha.
\eeqnn
As a result, we obtain
\beqnn
\displaystyle\lim_{n\to\infty}\frac{n}{k_{n}}=\alpha~~~\text{and}~~~\displaystyle\lim_{n\to\infty}\frac{M_{n}}{n}=\frac{1}{\alpha}.
\eeqnn
\qed

\end{proof}

\subsection {Proof of Theorem~\ref{th2.2.}}

Let
\beqnn
\Lambda(t)\coloneqq\mathbb{E}[\text{log}~m_{0}(t)],
\eeqnn
\beqnn
B\coloneqq\{t:\Lambda(t)<\infty\}.
\eeqnn
In order to use the result in branching random walk in a random environment (in time) (\cite{HLL14},\cite{HL14}), we have to ensure the following conditions (1)-(4):
\beqnn
(1) \mathbb{E}\log m_{0}(0)\in (0,\infty),~~(2)0\in \mathring{B},~~(3) \mathbb{E}\tau_{11}<\infty,~~(4) \mathbb{E}\frac{Y}{m_{0}(0)}\log^{+}Y<\infty.
\eeqnn
\textbf{Condition~(1)} Since $\omega_{max}<\frac{1}{m}$, $m>1$, use ($\ref{m0t}$) we can calculate that
$
\mathbb{E}\log m_{0}(0)=\mathbb{E}\big[\log\frac{m(1-\omega_{0})}{1-m\omega_{0}}\big]\in(0,\infty).
$\\
\textbf{Condition~(2)} If $t>\log\frac{1}{m\omega_{max}}$, i.e. $m\omega_{max}e^t>1$, then there exists a set A satisfies $\eta(A)>0$ and for any $\omega\in A, m\omega e^t\geq1$, therefore from ($\ref{m0t}$) we see that $m_{0}(t)=\infty$ on A. $\Lambda(t)=\int_{A}\log m_{0}(t)d\eta+\int_{A^{c}}\log m_{0}(t)d\eta=\infty$; if $t<\log\frac{1}{m\omega_{max}}$, then there exists a constant c such that $m\omega_{max}e^t\leq c<1$, in that case $\Lambda(t)=\mathbb{E}[\log\frac{m(1-\omega_{0})e^{t}}{1-m\omega_{0}e^{t}}]\leq \mathbb{E}[\log\frac{me^{t}}{1-c}]=\log\frac{me^t}{1-c}<\infty.$

Therefore $\mathring{B}=\big\{t:t<\log\frac{1}{m\omega_{max}}\big\}$. Since $\log\frac{1}{m\omega_{max}}>0$, $0\in \mathring{B}$ obviously.\\
\textbf{Condition~(3)} Note that $\omega_{max}<\frac{1}{m}$ ensures that $\{N_{n}(0)\}$ is a subcritical branching process and $\mathbb{E}T_{0}<\infty$, where $T_{0}$ denotes the extinction time of the branching process that always stays at 0, then $\mathbb{E}\tau_{11}\leq\mathbb{E}T_{0}<\infty.$
\\
\textbf{Condition~(4)} Let $N_{\infty}(0)$ denotes the total population of $\{N_{n}(0)\}$. We number all the particles in $\{N_{n}(0)\}$ from 1 to $N_{\infty}(0)$, let $N(0,i)$ represents the number of offspring of the $i^{th}$ particle of $\{N_{n}(0)\}$ that jumps from 0 to 1. When given the environment $\omega$, $N(0,i)$ are i.i.d., the generating function of $N(0,i)$ is $\phi_{N(0,i)}(s)=\phi_{Z}(\omega_{0}+(1-\omega_{0})s).$
\begin{eqnarray}\label{oY2}
E_{\omega}Y^{2}=E_{\omega}\big(\displaystyle\sum_{i=1}^{N_{\infty}(0)}N(0,i)\big)^{2} &=& E_{\omega}\big[E_{\omega}[(\displaystyle\sum_{i=1}^{N_{\infty}(0)}N(0,i))^{2}|N_{\infty}(0)]\big]\nonumber\\
&=& E_{\omega}\Big[E_{\omega}\big[\displaystyle\sum_{i=1}^{N_{\infty}(0)}N(0,i)^{2}+\displaystyle\sum_{1\leq i\neq j \leq N_{\infty}(0)}N(0,i)N(0,j)|N_{\infty}(0)\big]\Big]\nonumber\\
&=& E_{\omega}\big[\displaystyle\sum_{i=1}^{N_{\infty}(0)}E_{\omega}[N(0,i)^{2}]+\displaystyle\sum_{1\leq i\neq j \leq N_{\infty}(0)}E_{\omega}[N(0,i)N(0,j)]\big]\nonumber\\
&\leq& E_{\omega}[N_{\infty}(0)]E_{\omega}[N(0,1)^{2}]+E_{\omega}[(N_{\infty}(0))^{2}][E_{\omega}(N(0,1))]^{2}
\end{eqnarray}

Use the expression of $\phi_{N(0,1)}(s)=\phi_{Z}(\omega_{0}+(1-\omega_{0})s)$ and $\phi_{N(0)}(s)=\phi_{Z}(\omega_{0}s+1-\omega_{0})$, we can calculate that $E_{\omega}N(0,1)=m(1-\omega_{0})$, $E_{\omega}[N(0,1)^{2}]=(m^{(2)}+m)(1-\omega_{0})^{2}+m(1-\omega_{0})$, $E_{\omega}N(0)=m\omega_{0}<m\omega_{max}<1$, $E_{\omega}N(0)^{2}=(m^{(2)}+m)\omega_{0}^{2}+m\omega_{0}$. Then from ($\ref{oY2}$),
\begin{eqnarray}\label{oY3}
E_{\omega}Y^{2} &\leq& E_{\omega}N_{\infty}(0)(m^{(2)}+2m)+E_{\omega}[N_{\infty}(0)^{2}]m^2 \nonumber\\
&\leq& E[N_{\infty}(0)|\omega_{0}=\omega_{max}](m^{(2)}+2m)+E[N_{\infty}(0)^{2}|\omega_{0}=\omega_{max}]m^2
\end{eqnarray}
Since from our assumption $m\omega_{max}<1$, $m^{(2)}<\infty$, $E[N(0)^{2}|\omega_{0}=\omega_{max}]<\infty$. From Lemma 3.1 in \cite{DP96} we have $E[N_{\infty}(0)^{2}|\omega_{0}=\omega_{\max}]<\infty$, $E[N_{\infty}(0)|\omega_{0}=\omega_{max}]<\infty$. Combined with (\ref{oY3}), $\mathbb{E}Y^{2}=\int E_{\omega}Y^{2}d\eta\leq E[N_{\infty}(0)|\omega_{0}=\omega_{max}](m^{(2)}+2m)+E[N_{\infty}(0)^{2}|\omega_{0}=\omega_{max}]m^2<\infty$, thus condition (4) satisfies obviously.

For $t\in\mathring{B}$, we have
\beqnn
\Lambda(t)\coloneqq\mathbb{E}[\log m_{0}(t)]=\mathbb{E}\Big[\log\frac{m(1-\omega_{0})e^{t}}{1-m\omega_{0}e^{t}}\Big]<\infty ,
\eeqnn
\beqnn
\Lambda^{'}(t)=\mathbb{E}\big[\frac{m_{0}^{'}(t)}{m_{0}(t)}\big]=\mathbb{E}\big[\frac{1}{1-m\omega_{0}e^{t}}\big]<\infty .
\eeqnn

Let
\beqnn
\rho(t)\coloneqq t\Lambda^{'}(t)-\Lambda(t), ~~~~t\in\mathring{B}.
\eeqnn
\beqnn
t_{+}\coloneqq\sup\{t\in\mathring{B}: t\Lambda^{'}(t)-\Lambda(t)\leq0\}.
\eeqnn
Notice that $\rho '(t)=t\Lambda^{''}(t)$, and $\Lambda^{''}(t)\geq0~\text{for}~t\geq0$. Therefore, $\rho(t)$ increases on $[0,\infty)$. Since $\rho(0)=-\Lambda(0)<0$, $\rho(t)$ is continuous on $B$, we obtain that $t_{+}>0$.

From Theorem 3.4 in \cite{HL14} we get the result that $\displaystyle\lim_{m\to\infty}\frac{L_{m}}{m}=\Lambda^{'}(t_{+})=\mathbb{E}[\frac{1}{1-m\omega_{0}e^{t_{+}}}]$, where
$t_{+}=\sup\big\{t<\log\frac{1}{m\omega_{max}}: \mathbb{E}[\frac{t}{1-m\omega_{0}e^{t}}]-\mathbb{E}[\log\frac{m(1-\omega_{0})e^{t}}{1-m\omega_{0}e^{t}}]\leq0\big\}$. As a consequence,   the theorem follows from Lemma \ref{LeR}.
\qed

\begin{remark}  We require $m^{(2)}<\infty$ to simplify our proof to ensure $\mathbb{E}Y^{2}<\infty$, so that condition (4) is satisfied, this assumption may be relaxed since condition(4) is much weaker than $\mathbb{E}Y^{2}<\infty$.\\
\end{remark}

\subsection{Proof of  Theorem~\ref{th2.3.} }

\begin{proof} \textbf{of Theorem~\ref{th2.3.}} $M_{n}\to\infty~\mathbb{P}$-$a.s.$ has already been proved in Lemma~\ref{le3.2.}, we only need to prove that $\frac{M_{n}}{n}\to0~\mathbb{P}$-$a.s.$ We still use the idea to transform the original branching random walk into a new branching random walk with random environment in time. In this situation, we can not use (\cite{HL14}) any more since $\mathbb{E}|\log m_{0}(0)|=\infty$, but Lemma \ref{LeR} is still valid i.e.
\beqnn
\displaystyle\lim_{n\to\infty}\frac{L_{n}}{n}=\displaystyle\lim_{n\to\infty}\frac{n}{M_{n}}
\eeqnn
is still valid if the limit of $\frac{L_{n}}{n}$ exists. Thus we just need to show that
\beqnn
\displaystyle\lim_{n\to\infty}\frac{L_{n}}{n}=\infty~~~~\mathbb{P}\text{-}a.s.
\eeqnn

Since in the new branching random walk, the position of the first generation individual is the time when the original branching random walk individual jumps from $0$ to $1$, then the maximum value of the first generation's position is the maximum moment when the original branching random walk jumps from $0$ to $1$. On account of $p_{0}=0$ and the descendants of $\varnothing$ that stay at $0$ form a Galton-Watson process with mean offspring $m\omega_{0}$, the maximal jumping moment is the extinction time of this Galton-Watson process, that is,

Let $W_{k}^{(u)}$ denote the Galton-Watson process formed by particle $u$ and its descendants that stay at $V(u)$ \big($V(u)$ denotes the location of $u$\big). And let
\beqnn
T_{0}\coloneqq\{n: W_{n-1}^{(\varnothing)}\geq1,~W_{n}^{(\varnothing)}=0\}.
\eeqnn
Thus the maximum relative displacement of particles from $\varnothing$ is
\beqnn
\displaystyle\max_{1\leq i\leq Y(\varnothing)}\tau_{1i}=T_{0}.
\eeqnn

If $u_{1}$ denotes the particle that jumps from $0$ to $1$ at time $T_{0}$, similar to previous analysis we know that the descendants of $u_{1}$, that stay at $1$ form a Galton-Watson process with mean offspring $m\omega_{1}$.

Denote
\beqnn
T_{1}\coloneqq\{n: W_{n-1}^{(u_{1})}\geq1,~W_{n}^{(u_{1})}=0\}.
\eeqnn
Then the maximum relative displacement of particles from $u_{1}$ is
\beqnn
\displaystyle\max_{1\leq i\leq Y(u_{1})}(\tau_{2i}-T_{0})=T_{1},
\eeqnn
where $\tau_{2i}(1\leq i\leq Y(u_{1}))$ denotes the position of the descendants of $u_{1}$.

Iterating this procedure we obtain
\beqnn
L_{n}\geq\displaystyle\sum_{i=0}^{n-1}T_{i}.
\eeqnn
Hence,
\begin{eqnarray}\label{lnf}
\frac{L_{n}}{n}\geq\frac{1}{n}\displaystyle\sum_{i=0}^{n-1}T_{i}.
\end{eqnarray}
where $T_{i}$ is the extinction time of a Galton-Watson process with mean offspring $m\omega_{i}$. Recalling that $\omega_{i}$ is i.i.d, then $T_{i}$ is also i.i.d.

Besides,
\beqnn
\mathbb{E}(T_{0})=\displaystyle\int_{\{\omega_{0}=\frac{1}{m}\}}E_{\omega}(T_{0})d\eta + \displaystyle\int_{\{\omega_{0}<\frac{1}{m}\}}E_{\omega}(T_{0})d\eta.
\eeqnn
Note that on the set $\{\omega_{0}=\frac{1}{m}\}$, $E_{\omega}(T_{0})=\infty$, and on the set $\{\omega_{0}<\frac{1}{m}\}$, $0<E_{\omega}(T_{0})<\infty$. Combined with the assumption $\eta\big(\{\omega_{0}=\frac{1}{m}\}\big)>0$, we get that $\mathbb{E}(T_{0})=\infty$. By (\cite{DR04}) Theorem (7.2)
\beqnn
\frac{1}{n}\displaystyle\sum_{i=0}^{n-1}T_{i}\to\infty~~\mathbb{P}\text{-}a.s.,~as~n\to\infty.
\eeqnn
(\ref{lnf}) suggests that $\displaystyle\lim_{n\to\infty}\frac{L_{n}}{n}=+\infty~~\mathbb{P}$-$a.s.$ and the conclusion follows from Lemma \ref{LeR}.

\qed
\end{proof}



\begin{thebibliography}{99}
\baselineskip=14pt




\bibitem{AN72} Athreya, K.B. and Ney, P.E. (1972) Branching Processes. {\it Springer, Berlin}.

\bibitem{BG09} Bartsch, C., Gantert, N. and Kochler, M. (2009) Survival and growth of a branching random walk in random environment. {\it Markov Process. Related Fields}. \textbf{15}, 525-548.

\bibitem{BI76} Biggins, J.D. (1976) The first- and last-birth problems for a multitype age-dependent branching process. {\it Adv. Appl. Probab}. \textbf{8}, 446-459.

\bibitem{BR78} Bramson, M.D. (1978) Minimal displacement of branching random walk. Z. Wahrscheinlichkeitstheor. Verw. Geb. \textbf{45}, 89-108.


\bibitem{CMP98} Comets, F., Menshikov, M.V. and Popov, S.Y. (1998) One-dimensional branching random walk in a random environment: A classification. {\it Markov Process. Related Fields}. \textbf{4}, 465-477.


\bibitem{DH91} Dekking, F.M. and Host, B. (1991) Limit distributions for minimal displacement of branching random walks. {\it Probab. Theory Relat. Fields}. \textbf{90}, 403-426.

\bibitem{DP96} Dembo, A., Peres, Y. and Zeitouni, O. (1996) Tail estimates for one-dimensional random walk in random environment. {\it Comm. Math Phys}. \textbf{181}, 667-683.

\bibitem{DA07} Devulder, A. (2007) The speed of a branching system of random walks in random environment. {\it Prob. Stat. Letters}. \textbf{77}, 1712-1721.

\bibitem{DR04} Durret, R. (2004) Probability: Theory and Examples. 3rd ed. Belmont: Duxbury Press.

\bibitem{GH92} Greven, A. and den Hollander, F. (1992) Branching random walk in random environment: phase transition for local and global growth rates. {\it Probab. Theory Relat. Fields}. \textbf{91}, 195-249.

\bibitem{HA74} Hammersley, J.M. (1974) Postulates for subadditive processes. {\it Ann. Probab}. \textbf{2}, 652-680.

\bibitem{HY09} Hu, Y. and Yoshida, N. (2009) Localization for branching random walks in random environment. {\it Stoc. Proc. Appl}. \textbf{119}, 1632-1651.

\bibitem{HLL14} Huang, C., Liang, X. and Liu, Q. (2014) Branching random walks with random environments in time. {\it Front. Math. China}. \textbf{9}, 835-842.

\bibitem{HL14} Huang, C. and Liu, Q. (2014) Branching random walk with a random environment in time. Arxiv: 1407.7623.

\bibitem{KI75} Kingman, J.F.C. (1975) The first birth problem for an age-dependent branching process. {\it Ann. Probab}. \textbf{3}, 790-801.

\bibitem{ZE04} Zeitouni, O. (2004) Random walks in random environments. Lecture Notes in Mathematics \textbf{1837}. Springer-Verlag, Berlin, pp. 189-312.

 \end{thebibliography}
\end{document}